\theoremstyle{plain}
\newtheorem{thm}{Theorem}[section]
\newtheorem{cor}[thm]{Corollary}
\newtheorem{lem}[thm]{Lemma}
\newcommand{\FF}[1]{\mathbb F_{#1}}
\newcommand{\sbs}{\subseteq}
\renewcommand{\l}{\langle}
\renewcommand{\r}{\rangle}
\newcommand{\nor}{\vartriangleleft}
\newcommand{\End}{\operatorname{End}}
\newcommand{\sym}{\operatorname{Sym}}
\renewcommand{\t}{\times}
\newcommand{\vbasis}[1]{\{v_1,\ldots,v_{#1}\}}
\begin{document}
\title{The minimal base size for a $p$-solvable linear group}

\author{Zolt\'an Halasi} \address{Department of Algebra and Number
  Theory, Institute of Mathematics, University of Debrecen, 4010,
  Debrecen, Pf. 12, Hungary} \email{halasi.zoltan@renyi.mta.hu}

\author{Attila Mar\'oti} \address{Fachbereich
Mathematik, Technische Universit\"{a}t Kaiserslautern, Postfach
3049, 67653 Kaiserslautern, Germany \and Alfr\'ed R\'enyi Institute of
  Mathematics, Re\'altanoda utca 13-15, H-1053, Budapest, Hungary}
\email{maroti@mathematik.uni-kl.de \and maroti.attila@renyi.mta.hu}

\thanks{The research of the first author leading to these results has
  received funding from the European Union's Seventh Framework
  Programme (FP7/2007-2013) under grant agreement no. 318202, from ERC
  Limits of discrete structures Grant No.\ 617747 and from OTKA
  K84233.  The research of the second author was supported by a Marie
  Curie International Reintegration Grant within the 7th European
  Community Framework Programme, by an Alexander von Humboldt
  Fellowship for Experienced Researchers, by the J\'anos Bolyai
  Research Scholarship of the Hungarian Academy of Sciences, by OTKA
  K84233, and by the MTA RAMKI Lend\"ulet Cryptography Research
  Group.}

%\date{}

\begin{abstract}
  Let $V$ be a finite vector space over a finite field of order $q$
  and of characteristic $p$. Let $G\leq GL(V)$ be a $p$-solvable
  completely reducible linear group. Then there exists a base for $G$
  on $V$ of size at most $2$ unless $q \leq 4$ in which case there
  exists a base of size at most $3$. The first statement extends a
  recent result of Halasi and Podoski and the second statement
  generalizes a theorem of Seress. An extension of a theorem of
  P\'alfy and Wolf is also given.
\end{abstract}
\maketitle
%--------------------------------------------------------------
\begin{center}
{\it Dedicated to the memory of \'Akos Seress.}
\end{center}

\bigskip
%==========================================
\section{Introduction}
%------------------------------------------
For a finite permutation group $H \leq \mathrm{Sym}(\Omega)$, a
subset of the finite set $\Omega$ is called a base, if its
pointwise stabilizer in $H$ is the identity. The minimal base size
of $H$ (on $\Omega$) is denoted by $b(H)$. Notice that $|H| \leq
{|\Omega|}^{b(H)}$.

One of the highlights of the vast literature on base sizes of
permutation groups is the celebrated paper of \'A.~Seress
\cite{seress} in which it is proved that $b(H) \leq 4$ whenever $H$ is
a solvable primitive permutation group. Since a solvable primitive
permutation group is of affine type, this result is equivalent to
saying that a solvable irreducible linear subgroup $G$ of $GL(V)$ has
a base of size at most $3$ (in its natural action on $V$) where $V$ is
a finite vector space.

There are a number of results on base sizes of linear groups. For
example, D.~Gluck and K.~Magaard \cite[Corollary 3.3]{GM} have shown
that a subgroup $G$ of $GL(V)$ with $(|G|,|V|) = 1$ admits a base of
size at most $94$. If in addition it is assumed that $G$ is
supersolvable or of odd order then $b(G) \leq 2$ by results of
T.R.~Wolf \cite[Theorem A]{W} and S.~Dolfi \cite[Theorem
1.3]{Dolfi_2}. Later S.~Dolfi \cite[Theorem 1.1]{dolfi_biz} and
E.P.~Vdovin \cite[Theorem 1.1]{vdovin_biz} generalized this result to
solvable coprime linear groups. Finally, Z.~Halasi and K.~Podoski
\cite[Theorem 1.1]{HP} improved this result significantly, by proving that
even the solvability assumption can be dropped, and $b(G) \leq 2$ for
any coprime linear group $G$.

We note that for a solvable subgroup $G$ of $GL(V)$ acting
completely reducibly on $V$ we have $b(G) \leq 2$ if the Sylow
$2$-subgroups of $GV$ are Abelian (see \cite[Theorem
2]{DolfiJabara}) or if $|G|$ is not divisible by $3$ (see
\cite[Theorem 2.3]{Yang}).

The following definition has been introduced by M. W. Liebeck and
A. Shalev in \cite{LS}. For a linear group $G\leq GL(V)$ we say
that $\vbasis{k}\sbs V$ is a strong base for $G$ if any element of
$G$ fixing $\langle v_i\rangle$ for every $1\leq i\leq k$ is a
scalar transformation. The minimal size of a strong base for $G$
is denoted by $b^*(G)$. It is known that $b(G) \leq b^{*}(G) \leq
b(G)+1$ (see \cite[Lemma 3.1]{LS}). Furthermore, also $b^*(G)\leq
2$ holds for coprime linear groups by \cite[Lemma 3.3 and Theorem
1.1]{HP}.

The following theorem extends the
above-mentioned result of Seress \cite{seress} and that of Halasi and
Podoski to $p$-solvable groups.
%---------------------------
\begin{thm}\label{thm:HP-altalanositas}
  Let $V$ be a finite vector space over a field of order $q$ and of
  characteristic $p$. If $G \leq GL(V)$ is a $p$-solvable group acting
  completely reducibly on $V$, then $b^{*}(G) \leq 2$ unless $q \leq
  4$. Moreover if $q \leq 4$ then $b^{*}(G) \leq 3$.
\end{thm}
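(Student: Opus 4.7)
The plan is to combine the Halasi-Podoski coprime bound with the $p$-solvability hypothesis, following the general scheme of Seress's proof: reduce to an irreducible and then primitive faithful linear group, then analyze the primitive case through the Fitting subgroup. The engine is the fact cited in the introduction that coprime linear groups have $b^*(G) \leq 2$; all the work is in feeding $p$-solvability into this.

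First, complete reducibility lets me decompose $V = V_1 \oplus \cdots \oplus V_t$ into $G$-irreducible summands and handle each summand inductively. Combining the resulting strong bases requires a genericity argument, since strong bases constrain lines rather than vectors: one typically chooses two generic vectors out of the direct sum of basis vectors, which works when $q \geq 5$ but may cost one extra vector when $q \leq 4$. The imprimitive-to-primitive step is standard: if $G$ preserves an imprimitivity decomposition $V = U_1 \oplus \cdots \oplus U_m$ with the blocks permuted, I pass to the stabilizer of $U_1$ acting on $U_1$ and induct on $\dim V$.

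So assume $G$ acts faithfully, irreducibly, and primitively on $V$. In characteristic $p$, faithfulness forces $O_p(G) = 1$: any normal $p$-subgroup has a nonzero fixed subspace, which is $G$-invariant, so irreducibility makes it all of $V$ and the normal $p$-subgroup trivial. Together with $p$-solvability this gives $F^*(G) = F(G) = O_{p'}(G) =: F$, a nilpotent $p'$-group satisfying $C_G(F) = Z(F)$. Since $F$ acts coprimely and completely reducibly on $V$, the Halasi-Podoski theorem \cite{HP} yields vectors $v_1, v_2 \in V$ whose joint line-stabilizer in $F$ consists only of scalar matrices.

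The remaining step is to promote $\{v_1, v_2\}$ to a strong base for $G$ itself, at the cost of a third vector when $q \leq 4$. Any $g \in G$ fixing both lines splits as a product $g = g_p g_{p'}$ of commuting $p$- and $p'$-parts. The $p$-part is unipotent, and since the only $p$-th power root of unity in $\FF{q}^{\times}$ is $1$, fixing a line forces fixing the vector itself, so $g_p v_i = v_i$ for $i=1,2$. The $p'$-part stabilizes each line modulo a scalar, and using $C_G(F) = Z(F)$ together with the symplectic-type structure of $F$ in the primitive setting, one argues that $g_{p'}$ normalizes the stabilizer-in-$F$ of the two lines and is pushed into $F \cdot Z(GL(V))$, hence scalar. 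For $g_p$ one uses that each nontrivial unipotent in $G$ has a proper fixed subspace and that the freedom to move $(v_1, v_2)$ inside the Halasi-Podoski witness set avoids all of these when $q$ is large enough; when $q \leq 4$ an extra vector $v_3$ absorbs the remaining unipotent obstructions.

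The main obstacle is precisely this last step — extending the strong-base property from $F$ to $G$ while controlling unipotent $p$-elements and outer $p'$-elements simultaneously. The genericity and counting arguments become tight over $\FF{2}, \FF{3}, \FF{4}$ and produce a short list of exceptional primitive $p$-solvable configurations, analogous to the exceptions that Seress handles in \cite{seress}; treating these by hand, or equivalently absorbing them via the third vector, is what accounts for the weaker bound $b^*(G) \leq 3$ when $q \leq 4$ in place of $b^*(G) \leq 2$, and it should be the most delicate technical portion of the argument.
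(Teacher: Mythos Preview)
Your reduction has a genuine structural error at the pivotal step. From $O_p(G)=1$ and $p$-solvability you conclude $F^*(G)=F(G)=O_{p'}(G)$, but this is false: a $p$-solvable group may well have components. Any non-abelian simple $p'$-group $S$ (for instance $A_5$ when $p=7$, or a Suzuki group when $p=3$) is itself $p$-solvable with $O_p=1$ and $F(G)=1\neq F^*(G)=S$. More to the point, the paper explicitly encounters this situation: after the standard reductions one is left with the case where $G/Z$ is almost simple, $N=F^*(G)$ is quasisimple with $N/Z(N)$ a simple $p'$-group, and $V$ is an absolutely irreducible $\FF p N$-module. Your Fitting-subgroup scheme simply never reaches this case, and the Halasi--Podoski coprime bound cannot be applied to $F(G)$ since $F(G)$ may be just $Z$.

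Even granting your reduction, the ``promotion'' step is not an argument. Knowing $C_G(F)=Z(F)$ says nothing about why a $p'$-element stabilizing two particular lines should land in $F\cdot Z(GL(V))$; you would need to control the action of $G/F$ on the set of $F$-orbits of line pairs, and that is essentially the whole problem again. The paper does not attempt any such promotion. Instead it reruns the entire structural machinery of \cite{HP} inside the $p$-solvable setting: reduce from $b^*$ to $b$ via a short lemma on special bases (Corollary~\ref{cor:strong_basis}); dispose of $q\in\{2,4\}$ immediately by the Odd Order Theorem and Seress; pass through imprimitive, semilinear, tensor-centralizer, symplectic-type, and tensor-wreath reductions; and finally, in the almost quasisimple case, invoke the K\"ohler--Pahlings/Goodwin classification of irreducible quasisimple $p'$-groups without a regular orbit to see that $G$ is in fact a $p'$-group, whereupon \cite{HP} applies directly. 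The quasisimple endgame is precisely the piece your outline is missing.
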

%---------------------------

One of the motivations of Seress \cite{seress} was a famous result of
P.P.~P\'alfy \cite[Theorem 1]{Palfy} and Wolf \cite[Theorem 3.1]{Wolf}
stating that a solvable primitive permutation group of degree $n$ has
order at most $24^{-1/3}n^{d}$ where $d = 1+\log_9 (48\cdot
24^{1/3})=3.243\ldots$, that is to say, a solvable irreducible subgroup
$G$ of $GL(V)$ has size at most $24^{-1/3}{|V|}^{d-1}$. (This bound is
attained for infinitely many groups.) In the following we extend this
result to $p$-solvable linear groups $G$.
%---------------------------
\begin{thm}\label{Palfy-Wolf}
  Let $V$ be a finite vector space over a field of characteristic
  $p$. If $G \leq GL(V)$ is a $p$-solvable group acting completely
  reducibly on $V$, then $|G| \leq 24^{-1/3}|V|^{d-1}$ where $d$ is as
  above.
\end{thm}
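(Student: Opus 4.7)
The approach is to combine Theorem \ref{thm:HP-altalanositas} with the original P\'alfy-Wolf bound.

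First I would reduce to the case where $G$ acts irreducibly. By complete reducibility, decompose $V = V_1 \oplus \cdots \oplus V_k$ with each $V_i$ irreducible for $G$, and let $G_i$ denote the image of $G$ in $GL(V_i)$. Each $G_i$ is $p$-solvable and irreducible on $V_i$, and $G$ embeds in $\prod_i G_i$. If the theorem holds on each factor, then
\[
|G|\leq\prod_i 24^{-1/3}|V_i|^{d-1}=24^{-k/3}|V|^{d-1}\leq 24^{-1/3}|V|^{d-1},
\]
so we may assume $G$ is irreducible.

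For $q \geq 5$, Theorem \ref{thm:HP-altalanositas} yields $b^{*}(G)\leq 2$. If $\{v_1,v_2\}$ is a strong base, the map $g\mapsto(\langle gv_1\rangle,\langle gv_2\rangle)$ has kernel equal to $G\cap Z(GL(V))$, of order at most $q-1$. Hence
\[
|G|\leq(q-1)\Bigl(\frac{|V|-1}{q-1}\Bigr)^{2}=\frac{(|V|-1)^{2}}{q-1}.
\]
Since $3-d<0$, the target inequality $(|V|-1)^{2}/(q-1)\leq 24^{-1/3}|V|^{d-1}$ follows (using $(|V|-1)^{2}\leq|V|^{2}$) from $24^{-1/3}(q-1)\geq|V|^{3-d}$, whose right side is at most $1$ while the left side is at least $24^{-1/3}\cdot 4>1$ for $q\geq 5$. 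This closes the large-$q$ case essentially formally.

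When $q\in\{2,4\}$, $p=2$, and any nonabelian $2'$-composition factor of $G$ would be a nonabelian simple group of odd order, contradicting the Feit-Thompson theorem; hence $G$ is solvable, and the classical P\'alfy-Wolf bound (together with the reduction to irreducible $G$) closes this case. The main obstacle is $q=3$, where $G$ can be genuinely nonsolvable $3$-solvable, the nonabelian composition factors being Suzuki groups $\mathrm{Sz}(2^{2k+1})$ (the simple $3'$-groups). Here the plan would be to split off the solvable radical $R$, apply P\'alfy-Wolf to $R$ via the Clifford decomposition of $V$ for $R$, and bound $|G/R|$ using lower bounds on the dimensions of faithful $\FF{3}$-representations of Suzuki sections, which force $|V|$ to be large enough that the contribution of $|G/R|$ is absorbed into the exponent $d-1$.
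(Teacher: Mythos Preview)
Your reduction to the irreducible case and your treatment of $q\geq 5$ are correct; in fact the strong-base bound $|G|\leq (|V|-1)^2/(q-1)$ is a small sharpening of what the paper does. The paper simply uses $b(G)\leq 2\Rightarrow |G|\leq |V|^2$, which forces a short GAP check for $|V|\in\{9,25,27,49\}$; your inequality absorbs those small cases for $q\geq 5$ without computation. The $q\in\{2,4\}$ reduction to the solvable P\'alfy--Wolf theorem via Feit--Thompson is identical to the paper's.

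The gap is at $q=3$, where what you have written is a plan rather than an argument, and the plan as stated does not obviously close. You can indeed bound $|R|\leq 24^{-m/3}|V|^{d-1}$ from the Clifford decomposition of $V|_R$ into $m$ irreducible summands, but you then need $|G/R|\leq 24^{(m-1)/3}$, and it is not clear how ``dimension bounds for faithful $\FF{3}$-representations of Suzuki sections'' deliver this. The group $G/R$ does not act on $V$, so representation-dimension bounds for it have no direct bearing; at best $G/R$ (or a quotient) permutes the $R$-homogeneous components, but the kernel of that permutation action can be large and is exactly where the Suzuki composition factors may sit. Turning this into a bound of the required shape would require a structural analysis essentially as delicate as the one you are trying to avoid.

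The paper handles $q=3$ quite differently: it runs the same inductive structural reductions used for Theorem~\ref{thm:HP-altalanositas} (imprimitive, tensor-decomposable, symplectic type, central tensor wreath, almost quasisimple), and in each branch either applies the inductive hypothesis on a smaller dimension together with the permutation-group bound $|G/N|\leq 24^{(k-1)/3}$ of \cite{Maroti}, or proves $b(G)\leq 2$ directly (the almost quasisimple endgame), which gives $|G|\leq |V|^2$. The symplectic-type case is where the Suzuki-group information is actually used, but in a very concrete form: one shows the relevant $\mathrm{Sp}(2k,2)$ is too small to contain a Suzuki section for $k\leq 3$, and for $k\geq 4$ the crude order bound already suffices. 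To complete your proof you would need either to carry out those reductions for $q=3$, or to supply a genuinely new uniform bound on $|G/R|$ in terms of the $R$-decomposition of $V$.
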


We note that the bounds in Theorem \ref{thm:HP-altalanositas} are
best possible for all values of $q$. Indeed, there are infinitely
many irreducible solvable linear groups $G \leq GL(V)$ with
$|G| > {|V|}^{2}$ for $q=2$ or $3$ (see \cite[Theorem 1]{Palfy} or
\cite[Proposition 3.2]{Wolf}) and there are even infinitely many odd
order completely reducible linear groups $G \leq GL(V)$ with $|G|
> |V|$ for $q \geq 5$ (see \cite[Theorem 3B]{Palfy2} and the
remark that follows). For $q=4$ we note that there are primitive,
irreducible solvable linear subgroups $H$ of $GL(3,4)$ with
$b(H)=3$ and thus there are infinitely many imprimitive,
irreducible solvable linear groups $G = H \wr S \leq GL(3r,4)$
with $b(G)=3$ where $S$ is a solvable transitive permutation group
of degree $r$.

Theorem \ref{thm:HP-altalanositas} has been applied in \cite{CHMN} to
Gluck's conjecture.

%======================================================
\section{Preliminaries}
%------------------------------------------
Throughout this paper let $\FF q$ be a finite field of characteristic
$p$ and let $V$ be an $n$-dimensional vector space over $\FF
q$. Furthermore, let $G\leq GL(V)$ be a linear group acting on $V$ in
the natural way, let $b(G)$ denote its minimal base size, and let
$b^{*}(G)$ denote its minimal strong base size (both notions defined
in Section 1).

If the vector space $V$ is fixed, then the group of scalar
transformations of $V$ (the center of $GL(V)$) will be denoted by
$Z$. Thus $Z\simeq \FF q^\t$, the multiplicative group of the base
field. As $G\leq GL(V)$ is $p$-solvable if and only
if $G Z$ is $p$-solvable, we can (and we will)
always assume, in the proofs of Theorems \ref{thm:HP-altalanositas} and
\ref{Palfy-Wolf}, that $G$ contains $Z$. After choosing a basis
$\vbasis{n}\sbs V$, we will always identify the group $GL(V)$ with the
group $GL(n,q)$.

Put $t(q) =3$ for $q \leq 4$ and $t(q) = 2$ for $q \geq 5$.

Finally, if $G\leq GL(V)$ and $X\sbs V$, then $C_G(X)=\{g \in G \,
\mid \, g(x)=x \, \, \, \forall\,x\in X\}$ and $N_G(X)=\{ g \in G
\, \mid g(x)\in X \, \, \forall \, x\in X\}$ will denote the
pointwise and setwise stabilizer of $X$ in $G$, respectively.
%===================================================
\section{Special bases in linear groups}
%---------------------------------------------------
In this section we will show that there exist bases of special kinds
for certain linear groups. As a consequence (Corollary
\ref{cor:strong_basis}), we derive that it is sufficient to establish
the required bounds in Theorem \ref{thm:HP-altalanositas} for $b(G)$
rather than for $b^{*}(G)$.
%-----------------------------
\begin{thm}\label{thm:jo_basis}
  Let $V$ be an $n$-dimensional vector space over $\FF q$, a field of
  characteristic $p$ and let $Z \leq G \leq GL(V)$ be a $p$-solvable
  linear group.
  \begin{enumerate}
  \item If $n=2$ and $q \geq 5$, then
    at least one of the following holds.
    \begin{enumerate}
    \item There is a basis $x,y\in V$ such that
      $N_G(\l x\r)\sbs N_G(\l y\r)$.
    \item $p=2$ and there is a basis $x, y \in V$ such that $N_{G}(\l
      x \r) = Z \times C_2$ and the involution $g$ in $N_{G}(\l x
      \r)$ satisfies $g(x) = x$ and $g(y) = y + x$.
    \end{enumerate}
  \item If $n = 3$ and $q = 3$ or $4$, then at
    least one of the following holds.
    \begin{enumerate}
    \item There is a basis $x,y,z\in V$ such that
      $N_G(\l x\r)\cap N_G(\l y\r)\sbs N_G(\l z\r)$.
    \item There is a basis $x,y,z\in V$ such that
      $N_G(\l y,z\r)=G$.
    \end{enumerate}
  \end{enumerate}
\end{thm}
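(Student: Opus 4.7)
The plan is a case analysis on the structure of $G$ as a subgroup of $GL(V)$, driven by the restriction that $p$-solvability forbids $G$ from containing $SL(n,q)$: for $q\geq 5$ the simple group $PSL(2,q)$ is not $p$-solvable, and for $q\in\{3,4\}$ the same holds for $PSL(3,q)$. Dickson's classification in dimension $2$ and the corresponding classification of maximal subgroups in dimension $3$ then restrict $G$ to a short list of geometric types: reducible, imprimitive/monomial, primitive but contained in a torus normaliser or field-extension subgroup, or primitive with exceptional (but still $p$-solvable) quotient.

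For part (1), I would first dispose of the easy geometric cases. If $G$ stabilises an (ordered or unordered) pair $\{\langle x\rangle,\langle y\rangle\}$ of $1$-spaces, then $N_G(\langle x\rangle)=N_G(\langle y\rangle)$ and (a) holds. In the primitive irreducible case, $H:=N_G(\langle x\rangle)$ cannot contain a nontrivial transvection fixing $\langle x\rangle$ pointwise without reconstructing $SL(2,q)$, so $H$ has order coprime to $p$ modulo $Z$, is completely reducible by Maschke, and provides a second $H$-invariant line $\langle y\rangle$, giving (a). The delicate remaining subcase is $G$ reducible with a \emph{unique} invariant line $\langle x\rangle$: here (a) fails with this $x$ so I must establish (b). The key step is to argue that in odd characteristic a Maschke-type splitting of the unipotent radical restores complete reducibility, contradicting uniqueness of $\langle x\rangle$; this forces $p=2$, and an explicit calculation in the Borel then pins $N_G(\langle x\rangle)$ down to the two-element extension of $Z$ by the transvection described in (b).

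For part (2), if $G$ stabilises any $2$-dimensional subspace $W$, take $y,z$ a basis of $W$ and $x$ any complement, yielding (b). Otherwise $G$ is either irreducible or reducible with a unique invariant line $\langle u\rangle$ and no invariant $2$-plane. In the imprimitive monomial case, $G$ permutes three lines $\{\langle x\rangle,\langle y\rangle,\langle z\rangle\}$, and any element fixing two of them also fixes the third, giving (a). In the irreducible non-monomial case, $p$-solvability for $q\in\{3,4\}$ pushes $G$ into a Singer-normaliser inside $\Gamma L(1,q^3)$ (the remaining classical candidates either contain $PSL(3,q)$ or violate $p$-solvability), where point stabilisers on lines are small cyclic groups arising from the Galois factor; (a) follows by direct computation on the three Galois-conjugate lines. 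In the remaining reducible subcase, I would set $x=u$ and analyse the induced irreducible action of $G$ on the $2$-dimensional quotient $V/\langle u\rangle$ via the explicit classification of solvable subgroups of $GL(2,3)$ and $GL(2,4)$ to produce $y,z$ realising (a).

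The main obstacle I anticipate is part (1)(b): characterising precisely the minimally non-completely-reducible $p$-solvable subgroups of $GL(2,q)$ and showing that they force $p=2$ together with the very specific involution structure appearing in (b). The parallel difficulty in part (2) is the small-field reducible case, which cannot be reduced to part (1) (since $q\leq 4$ there) and requires a direct inspection of each solvable quotient action on a $2$-dimensional $\mathbb{F}_q$-space for $q\in\{3,4\}$, followed by a careful lifting of the resulting basis back to $V$.
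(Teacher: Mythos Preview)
Your case analysis has a genuine gap in part (1), and your ``main obstacles'' are in fact non-issues.

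\textbf{The reducible case is trivial.} If $G$ stabilises a line $\langle u\rangle$, take $y=u$ (and any complementary $x$): then $N_G(\langle y\rangle)=G\supseteq N_G(\langle x\rangle)$, so (a) holds. The same trick works in part (2) with $z=u$. So there is no ``minimally non-completely-reducible'' subcase to worry about, and the small-field reducible case in part (2) dissolves as well. The reduction to irreducible primitive $G$ is immediate.

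\textbf{Case (b) does not arise where you think.} It arises in the irreducible primitive case, not the reducible one, and your argument for that case is wrong. Your claim that a line stabiliser $H=N_G(\langle x\rangle)$ in an irreducible primitive $p$-solvable $G$ cannot contain a transvection fixing $x$ (else $G\supseteq SL(2,q)$) is false: for $q$ even, the full semilinear group $G=\Gamma L(1,q^2)\cong GL(1,q^2).2$ is irreducible, primitive, $p$-solvable, and for $x=1\in\mathbb{F}_{q^2}$ the Frobenius $a\mapsto a^q$ is a genuine transvection in $N_G(\langle x\rangle)$. This group is precisely the one witnessing (b). So your Maschke argument does not close out the primitive case.

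\textbf{What the paper does instead.} For part (1), after reducing to irreducible primitive $G$, the paper counts Sylow $p$-subgroups: if some line stabiliser is a $p'$-group, Maschke gives (a); otherwise $G$ has exactly $q+1$ Sylow $p$-subgroups, hence is transitive on lines, hence (using $Z\leq G$) transitive on nonzero vectors. Hering's classification of such groups then leaves, after discarding non-$p$-solvable possibilities, only normalisers of the Singer cycle $GL(1,q^2)$ for even $q$, and among those only $\Gamma L(1,q^2)$ itself fails (a), yielding (b) by direct computation. You are missing Hering's theorem or an equivalent classification tool here.

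For part (2) the paper simply checks the irreducible primitive subgroups of $GL(3,3)$ and $GL(3,4)$ by computer (via the primitive permutation groups of degrees $27$ and $64$). Your proposed hand classification is a different route; it is plausible since $p$-solvability forces solvability here, but it is considerably more work and your assertion that only Singer normalisers survive would need justification against the full list of geometric Aschbacher classes.
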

%-----------------------
\begin{proof}
  Firstly we may assume that $G$ is an irreducible primitive subgroup
  of $GL(V)$. Since $G$ is $p$-solvable by assumption, we see that $G$
  does not contain $SL(V)$.

  First consider statement (1). By considering the action of $G$ on
  the set $S$ of $1$-dimensional subspaces of $V$, we may assume that
  the number of Sylow $p$-subgroups of $G$ is equal to $|S| =
  q+1$. For otherwise there exists $\langle x \rangle \in S$ whose
  stabilizer in $G$ is a $p'$-group and thus Maschke's theorem gives
  1/(a). For $q=p$ any subgroup of $GL(V)$ with $q+1$ Sylow
  $p$-subgroups contains $SL(V)$, so in this case we are done. So
  assume that $q > p$.

  Since $G$ acts transitively on the set of Sylow $p$-subgroups of $G$
  and every Sylow $p$-subgroup stabilizes a unique subspace in $S$, it
  follows that $G$ acts transitively on $S$. Moreover since $Z \leq G$
  it also follows that $G$ acts transitively on the set of non-zero
  vectors of $V$.

  By Hering's theorem (see \cite[Chapter XII, Remark 7.5\ (a)]{HB}) we
  see that if $q$ is odd (and not a prime by assumption) then $q$ must
  be $9$ and $G$ has a normal subgroup isomorphic to $SL(2,5)$ (case
  (5)). But then $G$ is not $3$-solvable and so we can rule out this
  possibility.  Similarly, if $q$ is even, then the only possibility
  is that $G\geq Z$ normalizes a Singer cycle $GL(1,q^2)$ (case
  (1)). The only such group not satisfying 1/(a) is the full
  semilinear group $\Gamma(1,q^2)\simeq GL(1,q^2).2$. In this case
  taking $x$ to be any non-zero vector in $V$ we have $N_{G}(\l x \r)
  = Z \times C_2$ and the involution $g$ in $N_{G}(\l x \r)$ satisfies
  $g(x) = x$ and $g(y) = y+x$ for some $y\in V$.

  Finally, statement (2) has been checked with GAP \cite{GAP} by using
  the list of all primitive permutation groups of degrees 27 and 64,
  respectively.
\end{proof}
%-----------------------------------------
As a direct consequence we get the following.
%-----------------------------
\begin{cor}\label{cor:jo_basis}
  Let us assume that $Z \leq G\leq GL(V)$ is a $p$-solvable linear group with
  $b(G)\leq t(q)$.
  \begin{enumerate}
  \item If $q \geq 5$, then one of the following holds.
    \begin{enumerate}
    \item There exists a base $x,y\in V$ such that $N_G(\l x\r)\cap
      N_G(\l x,y\r)\sbs N_G(\l y\r)$.
    \item $p=2$ and there exists a base $x,y\in V$ such that any
      non-identity element of $C_G(x)\cap N_G(\l x,y\r)$ takes $y$ to
      $y+x$.
    \end{enumerate}
  \item If $q \leq 4$, then at least one of the following holds.
    \begin{enumerate}
    \item There exists a base $x,y,z\in V$ such that
      \[N_G(\l x\r)\cap N_G(\l y\r)\cap N_G(\l x,y,z\r)\sbs N_G(\l z\r).\]
    \item There exists a base $x,y,z\in V$ such that
      $N_G(\l x,y,z\r) \sbs N_G(\l y,z\r)$ with $x\notin \l y,z\r$.
    \end{enumerate}
  \end{enumerate}
\end{cor}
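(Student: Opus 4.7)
The plan is to reduce Corollary~\ref{cor:jo_basis} to Theorem~\ref{thm:jo_basis} by passing to the subspace $W$ spanned by a minimum base $B$ of $G$. Since $C_G(W) \sbs C_G(B) = 1$, the restriction map $N_G(W) \to GL(W)$ is injective; its image $H$ is a $p$-solvable subgroup of $GL(W)$ containing the scalars of $GL(W)$ (as $Z \leq G \leq N_G(W)$ restricts onto those scalars), so Theorem~\ref{thm:jo_basis} applies directly to $H$ in its own dimension. A minimum base is linearly independent: if $v_i$ lay in the span of the other $v_j$, then $C_G(v_i)$ would already contain $C_G$ of the rest and $v_i$ could be dropped. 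Thus $\dim W = |B|$, and any basis of $W$ produced by Theorem~\ref{thm:jo_basis} is automatically a base of $G$, because $C_G$ of such a basis equals $C_G(W) = 1$.

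For part (1), if $b(G) \leq 1$ I would pick $x$ with $C_G(x) = 1$; since $Z \leq G$, every element of $N_G(\l x\r)$ has the form $\lambda g$ with $g \in C_G(x) = 1$, so $N_G(\l x\r) = Z \sbs N_G(\l y\r)$ for any nonzero $y$, giving (a). If $b(G) = 2$, I would apply Theorem~\ref{thm:jo_basis}(1) to the two-dimensional $W$. In outcome (a) of the theorem, any $g \in N_G(\l x\r) \cap N_G(\l x,y\r)$ lies in $N_G(W)$ because $\l x,y\r = W$, so $g|_W \in N_H(\l x\r) \sbs N_H(\l y\r)$, forcing $g \in N_G(\l y\r)$. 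In outcome (b), I would identify $C_G(x) \cap N_G(\l x,y\r)$ with $C_H(x)$; only the trivial scalar in $N_H(\l x\r) = Z \times C_2$ fixes $x$, so $C_H(x) = \{1, g_0\}$, and any non-identity $g$ in the intersection must satisfy $g|_W = g_0$ (it cannot restrict to $1$ because $C_G(W) = 1$) and hence $g(y) = y+x$.

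For part (2), if $b(G) \leq 2$ I would take a base $\{x,y\}$ and set $z := y$; then $\{x,y,z\}$ is a base of $G$ and (a) holds trivially since $N_G(\l y\r) = N_G(\l z\r)$. If $b(G) = 3$, $W$ is three-dimensional and Theorem~\ref{thm:jo_basis}(2) applies to $H$: outcome (a) pulls back by the same normalizer-restriction argument to part (2)(a) of the corollary, while outcome (b), namely $N_H(\l y,z\r) = H$, pulls back to $N_G(\l x,y,z\r) = N_G(W) \sbs N_G(\l y,z\r)$, with $x \notin \l y,z\r$ free from $x,y,z$ being a basis of $W$. The only delicate point, arising in case (1)(b), is ruling out non-identity elements of $C_G(x) \cap N_G(\l x,y\r)$ whose restriction to $W$ would be trivial; this is immediate from $C_G(W) = 1$, so once the restriction dictionary is set up the whole argument is essentially bookkeeping.
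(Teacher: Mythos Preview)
Your proof is correct and follows essentially the same approach as the paper: restrict $N_G(W)$ to the subspace $W$ spanned by a base, note that the image in $GL(W)$ is $p$-solvable and contains the scalars, and apply Theorem~\ref{thm:jo_basis} there. The paper's proof is terser (it simply asserts that restriction to a $t(q)$-dimensional subspace generated by a base works), whereas you spell out the low-base-size edge cases and the pullback of each outcome explicitly, but the underlying idea is identical.
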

%--------------
\begin{proof}
  First, 1/(a) or 2/(a) holds if $\dim (V) < t(q)$ so assume that
  $\dim(V) \geq t(q)$. Both parts of the corollary can be proved
  by choosing a subspace $U\leq V$ of dimension $t(q)$ generated by a
  base for $G$ and by restricting $N_G(U)$ to this subspace. Notice
  that the image of this restriction is also $p$-solvable, so
  Theorem \ref{thm:jo_basis} can be applied.
\end{proof}
%--------------------------------------------------
\begin{cor}\label{cor:strong_basis}
  Let $V$ be a vector space over the field $\FF q$ of characteristic
  $p$. Let $Z \leq G \leq GL(V)$ be $p$-solvable with
  $b(G)\leq t(q)$. Then $b^*(G)\leq t(q)$.
\end{cor}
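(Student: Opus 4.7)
The plan is to apply Corollary \ref{cor:jo_basis} and, in each of its four cases, exhibit an explicit strong base of size $t(q)$ built as a small linear modification of the base vectors it supplies. The uniform recipe is to replace the ``last'' base vector by a sum involving the earlier base vectors; this way, whenever $g\in G$ stabilizes each line of the modified set, the images $g(y)$ (and $g(z)$ in dimension three) automatically land in $U:=\l x,y\r$ (resp.\ $\l x,y,z\r$), forcing $g\in N_G(U)$. At that point the hypothesis of Corollary \ref{cor:jo_basis} can be invoked to force the eigenvalues on different lines to coincide, so $g|_U$ becomes a scalar; then since $\{x,y\}$ (resp.\ $\{x,y,z\}$) is a base for $G$ and $Z\leq G$, the element $\alpha^{-1}g$ fixes the original base pointwise and hence is trivial, giving $g\in Z$.

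Concretely, I would propose $\{x,\,x+y\}$ in case~1/(a), the unchanged $\{x,y\}$ in~1/(b), $\{x,\,y,\,x+y+z\}$ in~2/(a), and the nested flag $\{x,\,x+y,\,x+y+z\}$ in~2/(b). In~1/(a) and~2/(a) the verification proceeds along the same lines: for instance in~2/(a), from $g(x)=\alpha x$, $g(y)=\beta y$ and $g(x+y+z)=\eta(x+y+z)$ one gets $g(z)=(\eta-\alpha)x+(\eta-\beta)y+\eta z$, and the conclusion $g\in N_G(\l z\r)$ from~2/(a) forces $\alpha=\beta=\eta$. Case~1/(b) is a one-line check: $h:=\alpha^{-1}g$ lies in $C_G(x)\cap N_G(\l x,y\r)$, and the alternative $h(y)=y+x$ offered by~1/(b) is immediately incompatible with $h$ preserving $\l y\r$.

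The subtlest case will be~2/(b). Here the hypothesis says that every $g\in N_G(U)$ has its matrix on $U$ in the basis $x,y,z$ with zero $(1,2)$ and $(1,3)$ entries, since $\l y,z\r$ is stable. For $g$ preserving each of $\l x\r$, $\l x+y\r$ and $\l x+y+z\r$, one computes $g(y)=(\beta-\alpha)x+\beta y$ and $g(z)=(\eta-\beta)(x+y)+\eta z$; the forced containment of $g(y)$ and $g(z)$ in $\l y,z\r$ first gives $\beta=\alpha$ and then $\eta=\alpha$, so $g|_U$ is a scalar and the base property of $\{x,y,z\}$ finishes the argument. The main obstacle I anticipate is simply choosing the correct linear modification in each case --- keeping $y$ untouched in~2/(a) but replacing it by $x+y$ in~2/(b) --- so that the matrix constraints produced by the strong-base conditions mesh precisely with the subgroup inclusions provided by Corollary \ref{cor:jo_basis}.
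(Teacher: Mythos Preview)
Your proof is correct and follows essentially the same approach as the paper: apply Corollary~\ref{cor:jo_basis} and construct an explicit strong base by a small linear modification of the given base vectors. Your specific choices differ slightly from the paper's (the paper uses $x,x+y$ uniformly for $q\geq 5$ and $x,\,y+x,\,z+x$ in case~2/(b)), but both sets of modifications work by the same mechanism, and your write-up supplies the verifications that the paper leaves to the reader.
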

%------------
\begin{proof}
  We may assume that $\dim (V) \geq t(q)$ and that $q > 2$.
  Let us choose a base for $G$ of size $t(q)$ satisfying the property
  given in Corollary \ref{cor:jo_basis}. For $q \geq 5$, if $x,y\in V$ is
  such a base, then $x,x+y$ is a strong base for $G$. Likewise, for
  $q = 3$ or $4$, if $x,y,z\in V$ is a base satisfying (2/a) of
  Corollary \ref{cor:jo_basis}, then $x,y,x+y+z$ is a strong base for
  $G$. Finally, in case $x,y,z\in V$ is a base for $G$ satisfying
  (2/b) of Corollary \ref{cor:jo_basis}, then
  $x,y+x,z+x$ is a strong base for $G$.
\end{proof}
%==========================================
\section{Further reductions}\label{sec:notirred}
%------------------------------------------
Let us use induction on the dimension $n$ of $V$ in the proofs of
Theorems \ref{thm:HP-altalanositas} and \ref{Palfy-Wolf}. The case $n =
1$ is clear. Let us assume that $n > 1$ and that both Theorems
\ref{thm:HP-altalanositas} and \ref{Palfy-Wolf} are true for dimensions
less than $n$.

First we reduce the proof of both theorems for the case when $G \leq GL(V)$ acts
irreducibly on $V$. For otherwise let $V=V_1\oplus
V_2\oplus\ldots\oplus V_k$ be a decomposition of $V$ to irreducible
$\FF q G$-modules. 

By induction, there exist vectors $x_{i,1}, \ldots ,
x_{i,t(q)}$ in $V_{i}$ for $1 \leq i \leq k$ with the property that
$C_{G}(\{ x_{i,1}, \ldots , x_{i,t(q)} \})$ is precisely the kernel of
the action of $G$ on $V_{i}$. Now put $x_{j} = \sum_{i=1}^{k}
x_{i,j}$ for $1 \leq j \leq t(q)$. One can see that $C_{G}(\{ x_{1},
\ldots , x_{t(q)} \}) = \cap_{i=1}^k C_G(V_i) = 1$.

For Theorem \ref{Palfy-Wolf} notice that $G$ is a subgroup of a
direct product $\times_{i=1}^k H_{i}$ of $p$-solvable groups $H_i$ acting
irreducibly and faithfully on the $V_{i}$'s. Hence we have 
\[
|G|\leq \prod_{i=1}^k |H_i|\leq \prod_{i=1}^k \left(24^{-1/3} |V_i|^{d-1}\right)
= 24^{-k/3}|V|^{d-1}
\] 
by induction.

So from now on we will assume that $G \leq GL(V)$ acts
irreducibly on $V$.

For Theorem \ref{thm:HP-altalanositas} we may also assume that $q
\not= 2$, $4$. Otherwise, $G$ is solvable by the Odd Order Theorem
and we can use the result of Seress \cite{seress}.

For Theorem \ref{Palfy-Wolf} we may assume that $|G|>|V|^2$. If
$|G| \leq |V|^{2}$ then ${|V|}^{2} < 24^{-1/3}{|V|}^{d-1}$ for
$|V| \geq 79$, so we may assume that $|V| \leq 73$. If $|V|$ is a
prime or $p=2$ then $G$ is solvable and the theorem of P\'alfy
\cite{Palfy} and Wolf \cite{Wolf} can be applied. Hence the cases
$|V| = 5^{2}, 7^{2}, 3^2$ or $3^3$ remain to be examined. But in
these cases there is no non-solvable, $p$-solvable irreducible
subgroup of $GL(V)$ (see \cite{GAP}).

Now, if $b(G)\leq 2$ then $|G|\leq |V|^2$. So, once Theorem 1.1 is
proved, it remains to prove Theorem \ref{Palfy-Wolf} only in case
$q=3$ and $b(G)>2$.
%==========================================
\section{Imprimitive linear groups}
%------------------------------------------
In this section we show that we may assume (for the proofs of
Theorems \ref{thm:HP-altalanositas} and \ref{Palfy-Wolf}) that $G$
is a primitive (irreducible) subgroup of $GL(V)$.

We first consider Theorem \ref{thm:HP-altalanositas}.

For $G\leq GL(V)$ an irreducible imprimitive linear group, let
$V=V_1\oplus\cdots\oplus V_k$ be a decomposition of $V$ into
subspaces such that $G$ permutes these subspaces in a transitive
and primitive way. This action of $G$ defines a homomorphism from
$G$ into the symmetric group $\sym(\Omega)$ for
$\Omega=\{V_1,\ldots,V_k\}$ with kernel $N$.

The factor group $G/N\leq S_k$ is $p$-solvable, so it does not involve $A_q$
for $q\geq 5$ and it does not involve $A_5$ for $q=3$.  By using
\cite[Theorem 2.3]{HP} it follows that for $q\geq 5$ there
is a vector $a=(a_1,\ldots,a_k)\in \FF q^k$ such that
$C_{G/N}(a)=1$, while for $q=3$ there is a pair of vectors
$a=(a_1,\ldots,a_k),\ b=(b_1,\ldots,b_k)\in \FF 3^k$ such that
$C_{G/N}(a)\cap C_{G/N}(b)=1$. (Here, $G/N$ acts on $\FF q^k$ by
permuting coordinates.)

In fact for $q \geq 8$ even we can say a bit more. For such a $q$ let
$S$ be a subset of $\FF q$ of size $q/2$ with the property that
for each $c \in \FF q$ exactly one of $c$ and $c+1$ is contained
in $S$. By \cite[Lemma 1/(c)]{Dolfi} there exists a vector $a =
(a_{1}, \ldots , a_{k}) \in S^{k}$ such that $C_{G/N}(a)=1$.

For each $1\leq i\leq k$ let $H_i=N_G(V_i)$, so $N=\cap_i H_i$. By
induction (on the dimension), there is a base in $V_1$ of size
$t(q)$ for $H_1/C_{H_1}(V_1)$.

Now we can use Corollary \ref{cor:jo_basis}. First let $q\geq 5$.
Then there is a base $x_1,y_1\in V_1$ for
$K_1=H_1/C_{H_1}(V_1)\leq GL(V_1)$ such that $N_{K_1}(\l x_1\r)
\cap N_{K_{1}}(\l x_{1},y_{1} \r) \sbs N_{K_1}(\l y_1\r)$ or that
any non-identity element of $C_{K_1}(x_1) \cap N_{K_{1}}(\l x_{1},y_{1}
\r)$ takes $y_{1}$ to $y_{1} + x_{1}$.

Let $\{g_1=1,g_2,\ldots,g_k\}$ be a set of left coset
representatives for $H_1$ in $G$ and $x_i=g_ix_1,\ y_i=g_iy_1$ for
every $i$. Now let
\[
x=\sum_{i=1}^k x_i,\qquad  y=\sum_{i=1}^k y_i+a_ix_i.
\]

In case $q=3$ let $x_1,y_1,z_1\in V_1$ be a base for
$K_1=H_1/C_{H_1}(V_1)\leq GL(V_1)$ satisfying (2/a) or (2/b) of
Corollary \ref{cor:jo_basis}. Again, let
$\{g_1=1,g_2,\ldots,g_k\}$ be a set of left coset representatives
for $H_1$ in $G$ and $x_i=g_ix_1,\ y_i=g_iy_1,\ z_i=g_iz_1$ for
every $i$. Depending on which part of part (2) of Corollary
\ref{cor:jo_basis} is satisfied for $x_1,y_1,z_1$ let
\begin{align*}
  x&=\sum_{i=1}^k x_i,&\quad y&=\sum_{i=1}^k y_i&\quad &z=\sum_{i=1}^k
  (z_i+b_ix_i+a_iy_i)&\textrm{ if (2/a) holds},\\
  x&=\sum_{i=1}^k x_i,&\quad y&=\sum_{i=1}^k (y_i+a_ix_i)&\quad &z=\sum_{i=1}^k
  (z_i+b_ix_i)&\textrm{ if (2/b) holds}.
\end{align*}
In each case, it is easy to see that the given set of vectors is a
base for $G$ by using similar arguments as in the proof of 
\cite[Theorem 2.6]{HP}.

Now we turn to the reduction of Theorem \ref{Palfy-Wolf} to primitive
groups. Notice that $N$ is a $p$-solvable group
and $V$ is the sum of at least $k$ irreducible $\FF q N$-modules, so
we have $|N| \leq 24^{-k/3}{|V|}^{d-1}$ by Section
\ref{sec:notirred}. Since the permutation group $G/N \leq S_{k}$ is
$3$-solvable, it does not contain any non-Abelian alternating
composition factor, and so $|G/N| \leq 24^{(k-1)/3}$, by
\cite[Corollary 1.5]{Maroti}. But then $|G| = |N| |G/N| \leq
24^{-1/3}{|V|}^{d-1}$ which is exactly what we wanted.
%==========================================
\section{Groups of semilinear transformations}
%------------------------------------------
In this section we reduce Theorems
\ref{thm:HP-altalanositas} and \ref{Palfy-Wolf} to the case when
every irreducible $\FF qN$-submodule of $V$ is absolutely
irreducible for any normal subgroup $N$ of $G$.

For this purpose let $N\nor G$ be a normal subgroup of $G$. Then
$V$ is a homogeneous $\FF qN$-module, so $V=V_1\oplus
V_2\oplus\cdots\oplus V_k$, where the $V_i$'s are isomorphic
irreducible $\FF qN$-modules. Let $T := \End_{\FF qN}(V_1)$.
Assuming that the $V_i$'s are not absolutely irreducible, $T$ is a
proper field extension of $\FF q$, and
  \[C_{GL(V)}(N)=\End_{\FF qN}(V)\cap GL(V)\simeq GL(k,T).\]
Furthermore, $L=Z(C_{GL(V)}(N))\simeq Z(GL(k,T))\simeq T^\t$. Now,
by using $L$, we can extend $V$ to a $T$-vector space of dimension
$l:=\dim_T V <\dim_{\FF q} V$. As $G\leq N_{GL(V)}(L)$, in this
way we get an inclusion $G\leq \Gamma L(l,T)$. We proceed by
proving the following theorem.
%----------------------------
\begin{thm}\label{lem:semi}
  For a proper field extension $T$ of $\FF q$ let $G\leq \Gamma
  L(l,T)$ be a semilinear group acting on the $\FF q$-space $V$ and
  let $H=G\cap GL(l,T)$. Suppose that $G$ is $p$-solvable and that
  $b(H) \leq t(|T|)$. Then $b(G) \leq t(|T|)$.
\end{thm}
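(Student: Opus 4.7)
The plan is to modify a strong base of $H$ (viewed as a $T$-linear group) by rescaling individual vectors by carefully chosen elements of $T^\times$, so as to destroy the stabilizer coming from $G\setminus H$.

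First I would argue that we may assume $T^\times\le G$, hence $T^\times\le H$: replacing $G$ by $GT^\times$ keeps it $p$-solvable and semilinear over $T$, and the outer induction on $\dim V$ underlying the proof of Theorem \ref{thm:HP-altalanositas} (applied to $HT^\times$ acting on the $l$-dimensional $T$-space $V$) yields $b(HT^\times)\le t(|T|)$. Corollary \ref{cor:strong_basis}, applied to $H$ viewed as a subgroup of $GL_T(V)$, then produces a strong base $v_1,\ldots,v_t$ of $H$ in the $T$-sense, with $t=t(|T|)$: the subgroup of $H$ stabilizing each $T$-line $\langle v_i\rangle_T$ is precisely $T^\times$.

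Set $M=\{g\in G : g\langle v_i\rangle_T=\langle v_i\rangle_T\text{ for all }i\}$. The strong-base property forces $M\cap H=T^\times$, so $M/T^\times$ embeds in $G/H\le\mathrm{Gal}(T/\FF q)$ and is cyclic. Writing $g(v_i)=\mu_i(g)v_i$ for $g\in M$, each $\mu_i\colon M\to T^\times$ is a $1$-cocycle, $\mu_i(gh)=\mu_i(g)\,\tau_g(\mu_i(h))$, where $\tau_g\in\mathrm{Gal}(T/\FF q)$ is the Galois automorphism induced by $g$. The subgroup $M_0:=\{g\in M:\mu_1(g)=1\}$ meets $T^\times$ trivially (since $\mu_1|_{T^\times}$ is the identity) and hits each $T^\times$-coset of $M$ exactly once, so $g\mapsto\tau_g$ identifies $M_0$ with $\mathrm{Gal}(T/F)$ for some intermediate field $F$. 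By Hilbert~90, $H^1(M_0,T^\times)=0$, so $\mu_2|_{M_0}$ is a coboundary: $\mu_2(g)=\nu/\tau_g(\nu)$ for some $\nu\in T^\times$. Since $g\in M$ fixes $\lambda_i v_i$ iff $\mu_i(g)=\lambda_i/\tau_g(\lambda_i)$, the choice $\lambda_1=1$ and $\lambda_2=\nu\theta$ with $\theta$ a primitive element of $T/F$ forces any $g\in M_0$ fixing $\lambda_2 v_2$ to satisfy $\tau_g(\theta)=\theta$, hence $\tau_g=1$ and $g\in M_0\cap H=\{1\}$. This settles $t=2$.

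For $t=3$ (which occurs only when $|T|=4$, $q=2$) I would iterate the construction: the subgroup $M_{0,0}:=\{g\in M_0:\mu_2(g)=1\}$ equals $\mathrm{Gal}(T/F(\nu))$ by the coboundary description; a second application of Hilbert~90 to $\mu_3|_{M_{0,0}}$ yields $\rho\in T^\times$ with $\mu_3(g)=\rho/\tau_g(\rho)$ on $M_{0,0}$; and $\lambda_3:=\rho\theta'$ with $\theta'$ primitive for $T/F(\nu)$ completes the base $\{v_1,v_2,\lambda_3 v_3\}$. The main obstacle is keeping the cocycle structure clean at each step --- Hilbert~90 is only available once the relevant subgroup genuinely sits inside $\mathrm{Gal}(T/\FF q)$, which is exactly what the identification $M\cap H=T^\times$ coming from the strong base provides --- while the existence of primitive elements for every finite field extension guarantees that each rescaling $\lambda_i$ can indeed be found.
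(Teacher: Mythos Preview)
Your argument is correct and takes a genuinely different route from the paper's.  The paper chooses a base $u_1,u_2$ for $H$ satisfying the refined property of Corollary~\ref{cor:jo_basis}, perturbs \emph{additively} to $u_1,\,u_2+\alpha u_1$, and uses a pigeonhole count over subfields of $T$ to find an $\alpha$ with $C_G(u_1)\cap C_G(u_2+\alpha u_1)=1$; the case $|T|=4$ is not treated directly but is absorbed into Seress's solvable bound (since then $q=2$ and $G$ is solvable).  You instead start from a \emph{strong} base (Corollary~\ref{cor:strong_basis}) and perturb \emph{multiplicatively}, $v_i\mapsto\lambda_i v_i$, using that $M_0=C_G(v_1)\cap M$ splits off $T^\times$ so that the remaining cocycles $\mu_i|_{M_0}$ live in $H^1(\mathrm{Gal}(T/F),T^\times)$, which vanishes by Hilbert~90; a primitive element then kills the stabilizer.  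Your approach is more structural and handles the $|T|=4$ case uniformly without invoking Seress, at the cost of importing Galois cohomology; the paper's argument stays closer to the elementary framework of \cite{HP} and needs only the cruder base property of Corollary~\ref{cor:jo_basis} rather than a full strong base.  Both proofs implicitly require $T^\times\le H$ in order to apply Corollary~\ref{cor:jo_basis} or~\ref{cor:strong_basis} over $T$; you make this explicit by passing to $GT^\times$ and citing the ambient dimension induction, which is a legitimate move in the context where the theorem is actually used.
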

%------------------
\begin{proof}
  We modify the proof of \cite[Lemma 6.1]{HP} to make it work in this
  more general setting.

  Clearly we may assume that $|T|\geq 8$ is different from a prime.
  In these cases $t(|T|)=2$.

  Let $u_{1}, u_{2}$ be a base for $H$. By Corollary
  \ref{cor:jo_basis}, we may also assume that
  $$N_{H}(\langle u_{1} \rangle) \cap N_{H}(\langle u_{1}, u_{2}
  \rangle) \subseteq N_{H}(\langle u_{2} \rangle)$$ or that every
  non-identity element of $C_{H}(u_{1}) \cap N_{H}(\langle u_{1},
  u_{2} \rangle)$ takes $u_{2}$ to  $u_{2} + u_{1}$.
  (The latter case occurs only if $p=2$.)

  For every $\alpha\in T$ let $H_\alpha=C_G(u_1)\cap C_G(u_2+\alpha
  u_1)\leq G$. Our goal is
  to prove that $H_\alpha=1$ for some $\alpha\in T$. If $g\in
  \l\cup H_\alpha\r$, then $g(u_{1}) = u_{1}$ and $g(u_2)=u_2+\delta u_1$ for
  some $\delta\in T$.

  We claim that $|\l\cup H_\alpha\r\cap H| \leq 2$. Let $h \in \l\cup
  H_\alpha\r\cap H$. On the one hand, the action of $h$ on $V$ is
  $T$-linear, since $h \in H$. On the other hand, $h(u_1)=u_1$ and
  $h(u_2)=u_2+\delta u_1$ for some $\delta\in T$. By our assumption
  above, either $h \in N_{H}(\langle u_{2} \rangle)$ and $\delta = 0$,
  or $h$ is an involution and $\delta = 1$. Thus we obtain the claim
  since $C_H(u_1)\cap C_H(u_2) = 1$.

  Let $z$ be the generator of the group $\langle\cup
  H_\alpha\rangle\cap H$. This is a central element in $\langle\cup
  H_\alpha\rangle$. For every $g \in G$ let $\sigma_{g} \in
  \mathrm{Gal}(T | \FF q)$ denote the action of $g$ on $T$.

  Let $g$ and $h$ be two elements of $\langle\cup H_\alpha\rangle$.
  Since $G/H$ is embedded into $\mathrm{Gal}(T | \FF q)$, we get
  $\sigma_{g} \not= \sigma_{h}$ unless $g = h$ or $g = hz$.
  Furthermore, a routine calculation shows that the subfields of $T$
  fixed by $\sigma_{g}$ and $\sigma_{h}$ are the same if and only if
  $\l g \r = \l h \r$ or $\l g \r = \l hz \r$.

  If $g \in H_{\alpha} \cap H_{\beta}$, then $g(u_{2}) = u_{2} +
  (\alpha - \alpha^{\sigma_{g}})u_{1} = u_{2} + (\beta -
  \beta^{\sigma_{g}})u_{1}$, so $\alpha - \beta$ is fixed by
  $\sigma_{g}$. Let $K_{g} = \{ \alpha \in T \mid g \in H_{\alpha}
  \}$. The previous calculation shows that $K_{g}$ is an additive
  coset of the subfield fixed by $\sigma_{g}$, so $|K_{g}| = p^{d}$
  for some $d \mid f = \log_{q}|T|$. Since for any $d \mid f$ there is
  a unique $p^{d}$-element subfield of $T$, we get $|K_{g}| \not=
  |K_{h}|$ unless the subfields fixed by $\sigma_{g}$ and $\sigma_{h}$
  are the same. As we have seen, this means that $\l g \r = \l h \r$
  or $\l g \r = \l hz \r$. Consequently, $|K_{g}| \not= |K_{h}|$
  unless $K_{g} = K_{h}$ or $K_{g} = K_{hz}$. Hence we get
  \[
  |\bigcup_{g \in \cup H_{\alpha} \setminus \{ 1\}} K_{g}| \leq 2
  \sum_{d \mid f, d < f} q^{d} \leq 2 \sum_{d < f} q^{d} < q^{f} = |T|.
  \]
  So there is a $\gamma \in T$ which is not contained in $K_{g}$ for
  any $g \in \cup H_{\alpha} \setminus \{ 1 \}$. This exactly means
  that $H_{\gamma} = C_{G}(u_{1}) \cap C_{G}(u_{2} + \gamma u_{1}) =
  1$.
\end{proof}
%----------------------
Using Theorem \ref{lem:semi}, we can assume that $G\leq GL(l,T)$.
As $l=\dim_T V<\dim_{\FF q}(V)$, we can use induction on the
dimension of $V$, thus $b(G) \leq 2$.

By the last paragraph of Section \ref{sec:notirred}, we need
not consider Theorem \ref{Palfy-Wolf} here.

Hence in the following we assume that $V$ is a direct sum of
isomorphic absolutely irreducible $\FF qN$-modules for any $N\nor
G$.
%==========================================
\section{Stabilizers of tensor product decompositions}
%------------------------------------------
Let $N\nor G$ and let $V=V_1\oplus\cdots\oplus V_k$ be a direct
decomposition of $V$ into isomorphic absolutely irreducible $\FF
qN$-modules. By choosing a suitable basis in $V_1,V_2,\ldots, V_k$, we
can assume that $G\leq GL(n,q)$ such that any element of $N$ is of
the form $A\otimes I_k$ for some $A\in N_{V_1}\leq GL(n/k,q)$. By
using \cite[Lemma 4.4.3(ii)]{KL} we get
\[
N_{GL(n,q)}(N)=\{B\otimes C\,|\,B\in N_{GL(n/k,q)}(N_{V_1}),\ C\in GL(k,q)\}.
\]
Let
\[
G_1=\{g_1\in GL(n/k,q)\,|\,\exists g\in G,g_2\in GL(k,q)
\textrm{ such that }g=g_1\otimes g_2\}.
\]
We define $G_2\leq GL(k,q)$ in an analogous way. Then $G\leq
G_1\otimes G_2$. Here $G/Z\simeq (G_1/Z)\times
(G_2/Z)$, hence $G_1\leq GL(n/k,q)$ and $G_2\leq GL(k,q)$ are 
$p$-solvable irreducible linear groups. 
If $1<k<n$, then by using induction
for $G_1\leq GL(n/k,q)$ and $G_2\leq GL(k,q)$ we get
$b(G_1)\leq t(q)$ and $b(G_2)\leq t(q)$. Furthermore $b^{*}(G_1) \leq
t(q)$ and $b^{*}(G_2) \leq t(q)$ by Corollary
\ref{cor:strong_basis}. Thus \cite[Lemma 3.3 (ii)]{LS} gives us
\begin{align*}
  b(G)&\leq b(G_1\otimes G_2) \leq b^*(G_1\otimes G_2)\leq \\
  &\max(b^*(G_1),b^*(G_2)) \leq t(q).
\end{align*}
For the reduction of Theorem \ref{Palfy-Wolf}, by using induction on the
dimension, we have
\[
|G| \leq |G_{1}| \cdot |G_{2}| \leq 24^{-1/3} {q}^{(n/k)(d-1)} \cdot
24^{-1/3}{q}^{k(d-1)} \leq 24^{-1/3}{|V|}^{d-1}.
\]
Thus, from now on we can assume that for every normal
subgroup $N\nor G$ either $N\leq Z$ or $V$ is absolutely
irreducible as an $\FF qN$-module.
%==========================================
\section{Groups of symplectic type}
\label{symplectic}
%---------------------------------------------------------

From now on assume that $N$ is a normal subgroup of $G$ containing $Z$
such that $N/Z$ is a minimal normal subgroup of $G/Z$.  Then $N/Z$
is a direct product of isomorphic simple groups. In this section we
examine the situation when $N/Z$ is an elementary Abelian group.

If $N$ is Abelian then it is central in $G$. So assume that $N$ is
non-Abelian.

If $N/Z$ is elementary Abelian of rank at least $2$, then $G$ is of
symplectic type. Such groups were examined in \cite[Section 5]{HP}
(see also \cite[Remark 5.20]{HP}) where it was proved that $b(G)\leq
2$ unless $q\in \{3,4\}$, when $b(G)\leq 3$ holds.

For the reduction of Theorem \ref{Palfy-Wolf}, we need only
examine the case $q=3,\ n=2^k$. For this we can use the fact that
$G/N$ can be considered as a subgroup of the symplectic group
$\mathrm{Sp}(2k,2)$. By the theorem of P\'alfy \cite{Palfy} and
Wolf \cite{Wolf}, we may assume that $G$ is a non-solvable (and
$3$-solvable) group. Thus we must have a composition factor of $G$
(and thus of $G/N$) isomorphic to a Suzuki group. Since the
smallest Suzuki group $\mathrm{Suz}(8)$ has order larger than
$|\mathrm{Sp}(4,2)|$, we must have $k \geq 3$. On the other hand,
since the second largest Suzuki group $\mathrm{Suz}(32)$ has order
larger than $|\mathrm{Sp}(6,2)|$ and since $\mathrm{Suz}(8)$ is
not a section of $\mathrm{Sp}(6,2)$ (since $13$ divides the order
of the first group but not the order of the second), we see that
$k \not= 3$. But for $k \geq 4$ we clearly have $|G|=|N| |G/N| <
2^{2k^{2} + 3k + 3} < 24^{-1/3}|V|^{d-1}$, by use of the formula
for the order of $\mathrm{Sp}(2k,2)$.
%==========================================
\section{Tensor product actions}
%------------------------------------------
Now let $N/Z$ be a direct product of $t\geq 2$ isomorphic
non-Abelian simple groups. Then $N=L_1\star L_2\star \cdots \star
L_t$ is a central product of isomorphic groups such that for every
$1\leq i\leq t$ we have $Z\leq L_i,\ L_i/Z$ is simple.
Furthermore, conjugation by elements of $G$ permutes the subgroups
$L_1,L_2,\ldots, L_t$ in a transitive way. By choosing an
irreducible $\FF qL_1$-module $V_1\leq V$, and a set of coset
representatives $g_1=1,g_2,\ldots,g_t\in G$ of $G_1=N_G(V_1)$ such
that $L_i=g_iL_1g_i^{-1}$, we get that $V_i:=g_iV_1$ is an
absolutely irreducible $\FF qL_i$-module for each $1\leq i\leq t$.
Now, $V\simeq V_1\otimes V_2\otimes \cdots \otimes V_t$ and $G$
permutes the factors of this tensor product. It follows that $G$
is embedded into the central wreath product $G_1\wr_c S_t$.
Clearly $G_{1}\leq GL(V_1)$ is a $p$-solvable irreducible linear group. 
Thus $b(G_{1}) \leq t(q)$ and $b^{*}(G_{1}) \leq t(q)$ by induction on
the dimension $m$ of $V_{1}$ and by Corollary
\ref{cor:strong_basis}.

First let $q\geq 5$. Then $t(q)=2$. Thus $b(G)\leq 2$ follows from
\cite[Theorem 3.6]{HP} unless $(m,t)=(2,2)$. In case $(m,t)=(2,2)$,
that is, $G\leq G_1\wr_c S_2\leq GL(4,q)$ for some $p$-solvable group
$G_1\leq GL(2,q)$ let $x_1,y_1\in V_1$ be a basis of $V_1$ satisfying
either $N_{G_1}(\l x_1\r) \sbs N_{G_1}(\l y_1\r)$ or the property that
every non-identity element of $C_{G_1}(x_1)$ takes $y_{1}$ to $y_{1} +
x_{1}$. (Such a basis exists by Theorem \ref{thm:jo_basis}.) Now, it
is easy to see that by choosing any $\alpha\in \FF q\setminus\{0,1\}$ we get
that $x_1\otimes x_1,\ y_1\otimes (y_1+\alpha x_1)$ is a base for
$G_1\wr_c S_2\geq G$.

Now, let $q=3$. Let $x_1,y_1,z_1\in V_1$ be a strong base for
$G_1$. Then the stabilizer of $\underbrace{x_1\otimes
  x_1\otimes\cdots\otimes x_1}_{t\textrm{ factors}}\in V$ is of the
form $H=H_1\wr_c S_t$, where $y_1,z_1\in V_1$ is a strong base for
$H_1=N_{G_1}(x_1)$, so $b^*(H_1)\leq 2$. If $(m,t)\neq (2,2)$ then
$b(H)\leq 2$ by \cite[Theorem 3.6]{HP}, which results in $b(G)\leq
3$. Finally, let $(m,t)=(2,2)$. By choosing a basis $x_1,y_1\in
V_1$, it is easy to see that $x_1\otimes x_1,y_1\otimes
y_1,x_1\otimes y_1\in V$ is a base for $GL(V_1)\wr_c S_2\geq G$.

As for the order of $G$ notice that $G \leq G_1\wr_c S$ where $S
\leq S_t$ is a $3$-solvable group. Thus by induction and by
\cite[Corollary 1.5]{Maroti} we have
\[
|G| \leq |G_{1}|^{t} |S| \leq 24^{-t/3}{|V_{1}|}^{(d-1)t} 24^{(t-1)/3}
= 24^{-1/3}{|V|}^{d-1}.
\]
%==========================================
\section{Almost quasisimple groups}
%---------------------------------------------------------
Finally, let $Z\leq N\nor G$ be such that $N/Z$ is a non-Abelian
simple group. Let $N_1=[N,N]\nor G$ and let $V_1$ be an irreducible $\FF
pN_1$-submodule of $V$ and $G_1=\{g\in G\,|\,g(V_1)=V_1\}$ be the
stabilizer of $V_1$. By using the same argument as 
in the last paragraph of \cite[Page 29]{HP} we get that 
$G_1$ is included in $GL(V_1)$ and we have a chain of subgroups
$N_1\nor G_1\leq GL(V_1)$ where
$G_1$ is $p$-solvable, $N_1$ is quasisimple and $V_1$ is
irreducible as an $\FF p N_1$-module.

Suppose that $b(G_{1}) \leq 2$ in the action of $G_1$ on $V_1$,
that is, there exist $x,y\in V_1\leq V$ such that $C_{G_1}(x)\cap
C_{G_1}(y)=1$. For any element $g\in G$ with $g(x)=x$ we have that
$N_1x=\{nx\,|\,n\in N_1\}$ is a $g$-invariant subset. As the
$\FF p$-subspace generated by $N_1 x$ is exactly $V_1$, we get
that $g\in G_1$. This proves that
$C_G(x)\cap C_G(y)=C_{G_1}(x)\cap C_{G_1}(y)=1$. Thus $b(G)\leq
2$.

Hence if we manage to show that $b(G_{1}) \leq 2$ then we are
finished with the proofs of both Theorems \ref{thm:HP-altalanositas} and
\ref{Palfy-Wolf}.

So assume that $G = G_{1}$ and $V = V_1$. Moreover, by the
previous sections, we have that $q=p$. Also $N = N_{1}$. To
summarize, $G \leq GL(V)$ is a group having a quasisimple
irreducible normal subgroup $N$ containing $Z$.

We claim that $G/Z$ is almost simple. For this it is sufficient to see that $N/Z$ is the unique minimal normal subgroup of $G/Z$. For let $M/Z$ be another minimal normal subgroup of $G/Z$. By Section \ref{symplectic}, we may assume that $M/Z$ is non-Abelian. Furthermore the group $MN$ is a central product and so $[M,N] = 1$. But this is impossible since the centralizer of $N$ in $G$ must be Abelian. 
%So assume that $G = G_{1}$ and $V = V_1$. Moreover, by the
%previous sections, we have that $q=p$. Also $N = N_{1}$. (To
%summarize, $G \leq GL(V)$ is a group having a quasisimple
%irreducible normal subgroup $N$ containing $Z$ and $G/Z$ is almost
%simple.)
%-----------------------------------
\begin{lem}
  \label{l1} If $N$ has a regular orbit on $V$ then $b(G) \leq 2$.
\end{lem}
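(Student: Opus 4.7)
The plan is to construct a base of size two directly from the regular $N$-orbit hypothesis. Pick $v \in V$ with $C_N(v) = 1$ and set $H := C_G(v)$. I will first show that $H$ is small, and then use a counting argument to produce a companion vector $w \in V$ with $C_H(w) = 1$; the pair $\{v,w\}$ will then satisfy $C_G(v) \cap C_G(w) = 1$, yielding $b(G) \leq 2$.

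For the first step, $C_N(v) = 1$ forces $H \cap N = 1$, and since $Z \leq N$ this gives $H \cap Z = 1$. Hence $H$ injects into $G/N$, which in turn embeds into $\out(N/Z)$ because $G/Z$ was just shown to be almost simple with socle $N/Z$. Consequently $|H| \leq |\out(N/Z)|$, and moreover no non-identity element of $H$ acts as a scalar on $V$.

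For the second step, every $1 \neq h \in H$ has fixed subspace properly contained in $V$ (as $h$ is neither the identity nor a scalar), so $|C_V(h)| \leq q^{n-1} = |V|/q$. A union bound then gives
\[
\bigl|\{w \in V : C_H(w) \neq 1\}\bigr| \;\leq\; \sum_{1 \neq h \in H} |C_V(h)| \;\leq\; (|H|-1)\,q^{n-1},
\]
which is strictly less than $|V|$ whenever $|H| \leq q$; in that regime the desired $w$ exists.

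The main obstacle is the residual range where $|\out(N/Z)|$ is comparable to or exceeds $q$, which arises chiefly when $N/Z$ is a Lie-type group over a proper extension of $\FF_p$ whose outer automorphism group contains a long field-automorphism cycle. In those cases I would sharpen the estimate on $|C_V(h)|$: an element of prime order $r$ induced by a field automorphism fixes at most an $n/r$-dimensional subspace of the absolutely irreducible $\FF_p N$-module $V$, giving $|C_V(h)| \leq q^{n/r}$, while diagonal and graph automorphisms yield analogous improvements. Substituting these refined estimates and invoking the classification-based lower bounds on the minimal dimension of a faithful projective representation of $N/Z$ should push the union bound below $|V|$, with any remaining low-dimensional configurations handled by direct inspection.
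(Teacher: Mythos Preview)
Your argument is clean when $|H|\leq q$, but the remainder is only a sketch: you acknowledge that $|\out(N/Z)|$ can exceed $q$ (recall $q=p$ in this section), and you propose to handle such cases via refined fixed-space bounds for field, diagonal, and graph automorphisms together with representation-dimension lower bounds and ``direct inspection''. None of that is actually carried out, and the assertion that a field automorphism of prime order $r$ fixes at most an $n/r$-dimensional subspace of the absolutely irreducible $\FF_pN$-module $V$ is not obvious in general; it would require a module-by-module justification. As written, the proof is incomplete precisely in the range that matters.

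The paper closes the argument without any case analysis by changing where the counting takes place. Instead of bounding $|C_V(h)|$ for $h\in H\setminus\{1\}$, it restricts attention to the regular $N$-orbit $\Delta\ni v$, which is a block of imprimitivity inside the $G$-orbit of $v$. On $\Delta$ the number of fixed points of $h$ equals $|C_N(h)|$, and since $h$ induces a nontrivial automorphism of $S=N/Z(N)$ this is at most $|N|/m(S)$, where $m(S)$ is the minimal index of a proper subgroup of $S$. Summing over $h\in H\setminus\{1\}$ gives fewer than $|H|\cdot|N|/m(S)$ covered points of $\Delta$, and the single inequality $|\out(S)|<m(S)$ due to Aschbacher and Guralnick forces this below $|\Delta|=|N|$. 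The idea you are missing is to exploit the regular orbit itself rather than the ambient space: the bound $|C_N(h)|\leq|N|/m(S)$ is uniform and purely group-theoretic, eliminating the dependence on $q$ and on the module structure that makes your union bound delicate.
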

%----------------
\begin{proof}
  Since $N$ is normal in $G$ a regular $N$-orbit $\Delta$ containing a
  given vector $v$ is a block of imprimitivity inside the $G$-orbit
  containing $v$. Hence the group $C_{G}(v) N$ is transitive on
  $\Delta$ and $N$ is regular on $\Delta$. Thus for every $h \in
  C_{G}(v)$ the number $|\mathrm{fix}(h)|$ of fixed points of $h$ on
  $\Delta$ is $|C_{N}(h)|$. To prove that $G$ has a base of size at
  most $2$ on $V$, it is sufficient to see that there exists a vector
  $w$ in $\Delta$ that is not fixed by any non-trivial element of
  $C_{G}(v)$.

  First notice that if $N/Z(N)$ is isomorphic to the non-Abelian
  finite simple group $S$ then $|C_{G}(v)| \leq |\mathrm{Out}(S)| <
  m(S)$ where $m(S)$ is the minimal index of a proper subgroup of
  $S$. This latter inequality follows from \cite[Lemma 2.7 (i)]{AG}.

  But
  \[
  \sum |\mathrm{fix}(h)| = \sum |C_{N}(h)| < |C_{G}(v)| \cdot \frac{|N|}{m(S)}
  < |N|
  \]
  where the sums are over all non-identity elements $h$ in
  $C_{G}(v)$. This completes the proof of the lemma.
\end{proof}
%------------------------------
By Lemma \ref{l1}, in the following we may assume that $N$ does not
have a regular orbit on $V$.  Our final theorem finishes the proofs of
Theorems \ref{thm:HP-altalanositas} and \ref{Palfy-Wolf}.
%------------------------------
\begin{thm}
  Under the current assumptions $G$ is a $p'$-group and $b(G) \leq 2$.
\end{thm}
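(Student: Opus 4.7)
The plan is to reduce everything to the coprime case already handled by Halasi--Podoski \cite{HP}. Concretely, I would first show that $G$ is a $p'$-group, and then, since $|V|$ is a $p$-power, this makes $G$ a coprime linear group on $V$, whereupon \cite[Theorem 1.1]{HP} immediately delivers $b(G) \leq 2$.

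The $p'$-ness of $G$ splits into two parts. Since $N/Z$ is a non-abelian simple chief factor of the $p$-solvable group $G/Z$, $p$-solvability forces $p \nmid |N/Z|$; together with $|Z| = q-1 = p-1$ being coprime to $p$ (we have reduced to $q = p$, and $p \neq 2$ by Section~\ref{sec:notirred}), this gives $p \nmid |N|$. The harder part is $p \nmid |G/N|$: since $G/Z$ has just been shown to be almost simple with socle $N/Z$, we have $G/N \hookrightarrow \out(N/Z)$, so any $p$-element of $G$ outside $N$ would induce an outer automorphism of order $p$ of the simple group $N/Z$.

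To exclude such outer automorphisms, I would use the standing assumption that $N$ has no regular orbit on $V$. Combined with $N$ quasisimple, $p \nmid |N|$, and $V$ absolutely irreducible as an $\FF pN$-module, this constrains $(N, V, p)$ to a short list of exceptional configurations coming from the classification of coprime quasisimple irreducible linear groups without a regular orbit (as used in \cite{HP} and the references therein). For each such configuration, I would inspect $N_{GL(V)}(N)$ and verify directly that any $p$-solvable overgroup of $N$ inside $GL(V)$ has $p'$-index over $N$, forcing $p \nmid |G/N|$ and completing the $p'$-assertion.

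The main obstacle lies in this last case analysis, particularly when $N/Z$ is a Lie type group defined over a field $\FF{\ell^{pm}}$ with $\ell \neq p$, since then $\out(N/Z)$ genuinely contains a field automorphism of order $p$ that must be ruled out. In these subcases the absolute irreducibility of $V$ over $\FF p$, combined with the very restrictive no-regular-orbit condition and the explicit dimension constraints imposed by the known exceptional list, is expected to pin down the field of definition of $V$ tightly enough that $m=0$, so no field automorphism of order $p$ can be realized inside $GL(V)$; this will finish the proof.
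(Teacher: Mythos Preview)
Your overall strategy matches the paper's exactly: show $G$ is a $p'$-group, then invoke the Halasi--Podoski coprime base-size theorem. Your argument that $p\nmid |N|$ is the same as the paper's, and the reduction of $p\nmid |G/N|$ to the no-regular-orbit classification is also what the paper does.

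Where you diverge is in the execution of the outer-automorphism step. You anticipate a delicate case analysis, singling out field automorphisms of order $p$ of Lie-type groups as the ``main obstacle'' and planning to eliminate them via dimension and field-of-definition constraints. The paper bypasses all of this with a single clean observation: on the K\"ohler--Pahlings list \cite[Theorem 2.2]{KP} of quasisimple $p'$-groups without a regular orbit, the characteristic $p$ is always at least $5$, while $|\out(N/Z)|$ is in every case a $\{2,3\}$-number. Hence $p\nmid |\out(N/Z)|$ automatically, and no case-by-case inspection of $N_{GL(V)}(N)$ or of field automorphisms is needed. Your plan would presumably succeed (the list is finite and explicit), but it is considerably more work than necessary; the shortcut is simply to read off these two numerical facts from the table.

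One minor citation point: the paper concludes with \cite[Theorem 4.4]{HP} (the almost-quasisimple case of the coprime result) rather than \cite[Theorem 1.1]{HP}, though either suffices.
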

%------------
\begin{proof}
By using Goodwin's theorem \cite[Theorem 1]{Goodwin}, K\"ohler and
  Pahlings \cite[Theorem 2.2]{KP} gave a complete list of (irreducible) quasisimple
  $p'$-groups $N$ such that $N$ does not have a regular orbit on
  $V$. In all these exceptional cases, when $N/Z$ is simple, $|\mathrm{Out}(N/Z)|$ is divisible by no prime larger than $3$ while $p$ is always at least $5$. So $G$ itself is a $p'$-group. But then $G$ admits a base of size $2$ on
  $V$ by \cite[Theorem 4.4]{HP}.
%By using Goodwin's theorem \cite[Theorem 1]{Goodwin}, K\"ohler and
 %Pahlings \cite[Theorem 2.2]{KP} gave a complete list of quasisimple
  %$p'$-groups $N$ such that $N$ does not have a regular orbit on
  %$V$. In all these exceptional cases the normalizer of $N$ in $GL(V)$
  %remains a $p'$-group (see \cite[Theorem 7.2a]{kgv}) and so $G$
  %itself is a $p'$-group. But then $G$ admits a base of size $2$ on
  %$V$ by \cite[Theorem 4.4]{HP}.
\end{proof}
%---------------------------------------------------------

%----------------------------------------------------------
\end{document}